\title{Chromatic Cardinalities via Redshift}
\begin{document}

\maketitle

\begin{abstract}
    Using higher descent for chromatically localized algebraic $K$-theory, we show that the higher semiadditive cardinality of a $\pi$-finite $p$-space $A$ at the Lubin--Tate spectrum $E_n$ is equal to the higher semiadditive cardinality of the free loop space $LA$ at $E_{n-1}$.
    By induction, it is thus equal to the homotopy cardinality of the $n$-fold free loop space $L^n A$.
    We explain how this allows one to bypass the Ravenel--Wilson computation in the proof of the $\infty$-semiadditivity of the $\Tn$-local categories.
\end{abstract}

\begin{figure}[ht!]
    \centering
    \includegraphics[width=120mm]{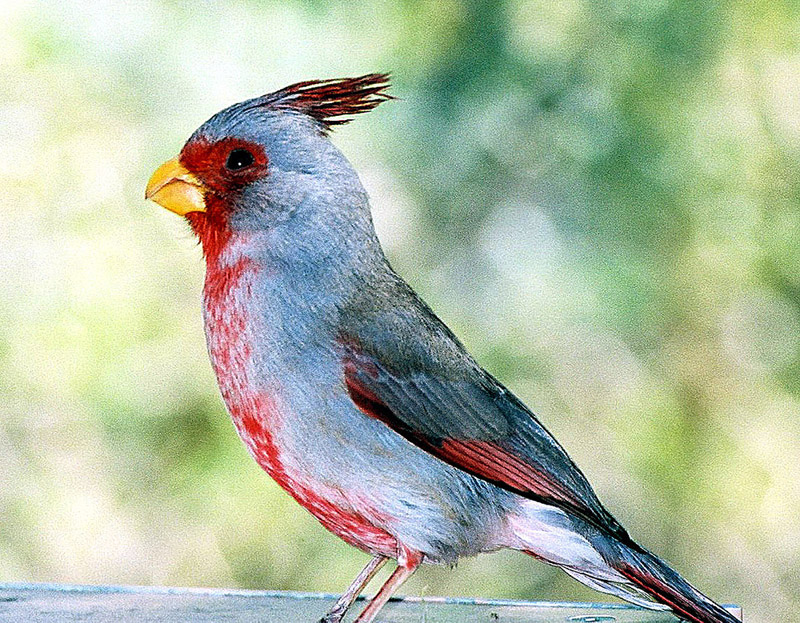}
    \caption*{
        ``\href{https://www.flickr.com/photos/92252798@N07/26805851281/}{Cardinalis sinuatus -- the Pyrrhuloxia}''
        by \href{https://www.flickr.com/photos/92252798@N07/}{Dick Culbert}
        licensed under \href{https://creativecommons.org/licenses/by/2.0/}{CC BY 2.0}.
    }
\end{figure}

\newpage

\section{Introduction}

Natural numbers appear, naturally, as cardinalities of finite sets. Their fundamental role in algebra can be explained from a categorical perspective via the notion of \textit{semiadditivity}. 
A semiadditive category $\cC$ is one in which finite products and finite coproducts canonically coincide (e.g.\ abelian groups). 
When this property holds, every finite set $A$ with cardinality $|A| \in \NN$ induces a natural operation of multiplication-by-$|A|$ on objects $X \in \cC$ via the composition
\[
    X \too[\Delta] \invlim_A X \simeq \colim_A X \too[\nabla] X.
\]
When $\cC$ is monoidal with unit $\one$ (such that the tensor product commutes with finite (co)products in each variable) and $R$ is an algebra object in $\cC$, the map $|A|_R\colon R \to R$ is canonically an $R$-module map, and thus by adjunction is identified with an element of the semiring $\hom(\one,R)$. The resulting semiring homomorphism $\NN \to \hom(\one,R)$ realizes every abstract natural number $n$ as a specific element $n_R$ of $R$. 

In \cite{AmbiKn}, it was observed that the property of semiadditivity sits in a natural hierarchy of \textit{higher semiadditivity} properties, which are most natural to consider in the setting of $\infty$-categories, to which from now on we shall refer simply as \textit{categories}.
In particular, a category $\cC$ is $0$-semiadditive if it is semiadditive in the ordinary sense, and it is $\infty$-semiadditive if, roughly, limits and colimits over $\pi$-finite spaces (i.e.\ those with finitely many connected components, each of which has finitely many non-vanishing homotopy groups all of which are finite)
canonically coincide. As above, this provides a natural multiplication-by-$|A|$ map $|A|_X\colon X \to X$ for every $\pi$-finite space $A$. 
If $\cC$ is moreover monoidal (such that the tensor product commutes with (co)limits over $\pi$-finite spaces in each variable) and $R$ is an algebra object in $\cC$, we obtain an element $|A|_R$ in the semiring $\pi_0\Map(\one, R)$. However, the determination of this element in concrete examples is in general a non-trivial task. 

The most prominent examples of $\infty$-semiadditive categories come from chromatic homotopy theory. Hopkins and Lurie show in \cite{AmbiKn} that for every $n \ge 0$, the Bousfield localization $\SpKn$ of the category of spectra with respect to the height $n$ and (implicit) prime $p$ Morava $K$-theory ring spectrum $K(n)$ is $\infty$-semiadditive. In \cite{TeleAmbi}, it was further shown that the larger telescopic localizations $\SpTn$ are $\infty$-semiadditive as well (and are in fact the maximal ones in a certain precise sense).
In the height $n=0$ case, we have $K(0) = \QQ$ and the corresponding cardinality of every $\pi$-finite space coincides with its Baez--Dolan \textit{homotopy cardinality}:

\begin{example}[{\cite[Proposition 2.3.4]{AmbiHeight}}]\label{htpy-card}
    Working in $\Sp_\QQ$, for every connected $\pi$-finite space $A$ we have
    \[
        |A|_{\Sph_\QQ} = \frac{|\pi_2(A)| |\pi_4(A)| |\pi_6(A)| \cdots}{|\pi_1(A)| |\pi_3(A)| |\pi_5(A)| \cdots}
        \qin \pi_0(\Sph_\QQ) \simeq \QQ.
    \]
\end{example}

In higher chromatic heights the situation is considerably more involved. Note that the prime $p$ is no longer invertible so the above formula does not even make sense. In fact, Yuan and the second author showed in \cite{CarmeliYuan} that already in height $n=1$ the cardinality of a $\pi$-finite space may be \textit{non-rational}:

\begin{example}[{\cite[Theorem A]{CarmeliYuan}}]\label{CY}
    Working in $\Sp_{T(1)} = \Sp_{K(1)}$ at the prime $p = 2$, we have\footnote{In fact, \cite[Theorem A]{CarmeliYuan} determines $|A|$ for all $\pi$-finite $p$-spaces $A$. This was further extended to all $\pi$-finite spaces in Yifan Li's master thesis (University of Copenhagen).}
    \[
        |BC_2|_{\Sph_{K(1)}} = 1 + \varepsilon
        \qin \pi_0(\Sph_{K(1)}) \simeq \ZZ_2[\varepsilon]/(\varepsilon^2, 2\varepsilon).
    \]
\end{example}

In higher heights, not much is known about cardinalities of $\pi$-finite spaces for $\Sph_{K(n)}$, as even the knowledge of the ring $\pi_0(\Sph_{K(n)})$ itself is limited, and for $\Sph_{T(n)}$ the situation is even worse. However, the situation improves drastically if one passes to the algebraic closure of $\Sph_{\Kn}$, namely, the Lubin--Tate ring spectrum $E_n$ associated to the unique formal group over the algebraically closed field $\cl{\FF}_p$. The image of the map 
\[
    \pi_0(\Sph_{\Kn}) \too 
    \pi_0(E_n) \simeq
    W(\cl{\FF}_{p})[[u_1,\dots,u_n]]
\]
is the subring $\ZZ_p = W(\FF_p)$, and since $|A|_{E_n}$ is the image of $|A|_{\Sph_{\Kn}}$ under this map, we can consider it simply as a $p$-adic integer. Furthermore, the kernel of the above map is the nil-radical of $\pi_0(\Sph_{\Kn})$ (see, e.g., \cite[Proposition 2.2.6]{AmbiHeight}), hence the number $|A|_{E_n}$ retains precisely the information in $|A|_{\Sph_{\Kn}}$ modulo nilpotents. Some simple cases can be worked out explicitly:

\begin{example}[{\cite[Lemma 5.3.3]{TeleAmbi}}]\label{Ex_Card_EM}
    Working in $\SpKn$ (or $\SpTn$), for every $d \ge 0$, we have
    \[
        |B^dC_p|_{E_n} = p^{\binom{n-1}{d}}
        \qin \ZZ_p \sseq \pi_0(E_n).
    \]
\end{example}

As mentioned in \cite[Example 2.2.4]{AmbiHeight}, it is possible to show that the cardinalities at the Lubin--Tate spectrum are in fact \textit{natural numbers} and deduce a fairly explicit formula for them.
The idea is to use the technology of tempered ambidexterity developed in \cite{Lurie_Ell3}, extending \cite{hopkins2000generalized, stapleton2013transchromatic}, to express the height $n+1$ cardinality of a space via the height $n$ cardinality of a closely related space.
Applying this inductively, this reduces computations to height $0$.
In more detail, let us denote by $L_p A := \Map(B\ZZ_p, A)$ the $p$-adic free loop space of $A$. 
There is a certain $K(n)$-local $E_{n+1}$-algebra $C_n$ and a natural transchromatic character map
\[
    \chi^A \colon E_{n+1}^A \too C_n^{L_pA}.
\]
The compatibility of these maps with transfers in the variable $A$ implies that $\pi_0(E_{n+1}) \to \pi_0(C_n)$ takes the element $|A|_{E_{n+1}}$ to the element $|L_pA|_{C_n}$ for every $\pi$-finite space $A$. 
One can further show that the cardinalities at $C_n$ and at $E_n$ agree, thus showing that $|A|_{E_{n+1}} = |L_pA|_{E_n}$. Intuitively, the $p$-adic free loop space accounts for the decrease in height.

In the special case where $A$ is further assumed to be a $p$-space (i.e., all homotopy groups are $p$-groups), the $p$-adic free loop space coincides with the ordinary free loop space $LA = \Map(B\ZZ,A)$, in which case $|A|_{E_{n+1}} = |LA|_{E_n}$. In this paper we provide a \textit{different} proof of this special case:

\begin{restatable}{theorem}{cardblue}\label{card-blue}
    Let $A$ be a $\pi$-finite $p$-space. The cardinality $|A|_{E_{n+1}}$ is a natural number and
    \[
        |A|_{E_{n+1}} = |LA|_{E_n} \qin \NN.
    \]
\end{restatable}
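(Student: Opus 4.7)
The plan is to pass through chromatically localized algebraic $K$-theory, using the redshift of $K(E_n)$ to height $n+1$ together with a descent formula in which the ``height correction'' is the free loop space. The identity $|A|_{E_{n+1}} = |LA|_{E_n}$ will be exhibited in $\pi_0 L_{T(n+1)} K(E_n)$, and integrality will follow by induction on $n$ starting from Example~\ref{htpy-card}. Concretely, I would first assemble a comparison diagram $E_{n+1} \to L_{T(n+1)} K(E_n) \leftarrow E_n$, where the left map comes from redshift (compatibly with higher semiadditive structures) and the right one is a canonical structure map, so that both cardinalities of interest can be tracked into the common ring $\pi_0 L_{T(n+1)} K(E_n)$.

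\textbf{The key descent input.} The technical heart of the argument is a descent identity of the form
\[
    L_{T(n+1)} K(E_n^A) \simeq L_{T(n+1)} K(E_n)^{LA}
\]
for a $\pi$-finite $p$-space $A$, where $E_n^A = \Map(A, E_n)$ is the cotensor algebra at height $n$. The free loop space appears on the right because the cyclotomic structure intrinsic to $K$-theory (via the $S^1$-action on $\operatorname{THH}$) should convert a cotensor by $A$ at height $n$ into a cotensor by $\Map(S^1, A) = LA$ at height $n+1$ after $T(n+1)$-localization. Granting this, the functoriality of the semiadditive cardinality under equivalences of cotensor algebras identifies the image of $|A|_{E_{n+1}}$ in $\pi_0 L_{T(n+1)} K(E_n)$ with that of $|LA|_{E_n}$, yielding the desired equality; integrality then follows by induction, the base case being Example~\ref{htpy-card}, where the homotopy cardinality of the iterated free loop space of a $\pi$-finite $p$-space is visibly an integer.

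\textbf{Main obstacle.} The crux is establishing the descent formula above as the ``higher descent for chromatically localized $K$-theory'' alluded to in the abstract, and verifying that the descent is indexed precisely by $LA$ (and not, for example, by $L_p A$, which agrees on $p$-spaces but differs in general). Related technical points include ensuring compatibility of the higher semiadditive structures along the comparison diagram of Step~1, so that cardinalities can be unambiguously traced through to extract the final equality in $\pi_0(E_n)$.
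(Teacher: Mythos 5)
Your high-level plan — route the computation through $L_{T(n+1)}K(E_n)$, invoke redshift, and exploit a descent property of chromatically localized $K$-theory — is in the same spirit as the paper's argument, but the two technical pivots you propose are not the ones the paper uses, and as stated they contain genuine errors.

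First, the ``key descent input.'' The higher descent theorem that the paper invokes says that $\KTnp\colon\CatLnf\to\SpTnp$ preserves $\pi$-finite $p$-space indexed limits and colimits of \emph{categories}; in particular it sends a cotensor by $A$ to a cotensor by $A$, not by $LA$. Your proposed identity $L_{T(n+1)}K(E_n^A)\simeq L_{T(n+1)}K(E_n)^{LA}$ is not what the descent theorem gives, and in fact it cannot be right: already for $A=BC_p$ one has $LBC_p\simeq C_p\times BC_p$, so your right-hand side would acquire an extra $C_p$-fold product that does not appear on the left. The heuristic that the cyclotomic $S^1$-action on $\operatorname{THH}$ should convert $A$ into $\Map(S^1,A)$ is appealing but is not how the loop space actually enters. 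What the paper does instead is: (i) use descent to show that $\KTnp$ preserves higher semiadditive cardinalities, so that $|A|_{\KTnp(E_n)}=[A\otimes E_n]$ in $\pi_0\KTnp(E_n)$; (ii) observe that $\pi_0 K(E_n)\simeq\ZZ$ (because $\pi_0 E_n$ is complete regular local, perfect $=$ dualizable, and every perfect module is an iterated extension of free ones), so $[A\otimes E_n]=\dim(A\otimes E_n)$; (iii) apply the general identity $\dim(A\otimes\one)=|LA|$ for symmetric monoidal $\infty$-semiadditive categories (this is where $LA$ enters, and it is independent of $K$-theory). Your argument is missing step (iii) entirely and mislocates the source of the loop space.

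Second, the comparison diagram. You propose $E_{n+1}\to L_{T(n+1)}K(E_n)\leftarrow E_n$, attributing the left map to redshift and the right to a ``canonical structure map.'' Neither map exists in the form you need. Redshift only tells you that $L_{T(n+1)}K(E_n)\ne 0$; it provides no map from $E_{n+1}$. And $E_n$ is not meaningfully a $T(n+1)$-local object, so there is no useful $E_n$-algebra structure on $L_{T(n+1)}K(E_n)$ through which to transport a cardinality computed at $E_n$. The missing ingredient is the chromatic nullstellensatz: from $L_{T(n+1)}K(E_n)\ne 0$ one obtains a map of $T(n+1)$-local commutative algebras $L_{T(n+1)}K(E_n)\to E_{n+1}(\kappa)$ into a Lubin--Tate spectrum over some algebraically closed field $\kappa$, and one compares with the map $E_{n+1}\to E_{n+1}(\kappa)$ induced by $\cl{\FF}_p\hookrightarrow\kappa$. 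Both of these go \emph{into} a common target; your proposal has them going out of $L_{T(n+1)}K(E_n)$, which reverses the direction of the crucial arrow and obscures why the nullstellensatz is needed at all. Finally, integrality in the paper comes directly from $\pi_0 K(E_n)\simeq\ZZ$ rather than from the induction on $n$ that you sketch; your inductive argument could in principle be made to work, but only after the above gaps are closed.
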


Applying this inductively and combining with \cref{htpy-card}, we get:

\begin{cor}\label{card-to-0}
    $|A|_{E_n}$ equals the Baez--Dolan homotopy cardinality of $L^nA$ from \Cref{htpy-card}.
\end{cor}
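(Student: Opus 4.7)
The plan is a straightforward induction on $n$, iterating \Cref{card-blue} to lower the height to zero and then invoking \Cref{htpy-card}.

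For the base case $n = 0$, the spectrum $E_0$ is rational, so $|A|_{E_0}$ is already computed in $\Sp_\QQ$, where \Cref{htpy-card} applies and identifies it with the Baez--Dolan homotopy cardinality of $A = L^0 A$. Although \Cref{htpy-card} is stated only for connected $\pi$-finite spaces, the general case follows because cardinality is additive over finite coproducts in any semiadditive category, and the homotopy cardinality is additive over components by definition.

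For the inductive step, assume the corollary at height $n$ and let $A$ be a $\pi$-finite $p$-space. By \Cref{card-blue},
\[
    |A|_{E_{n+1}} = |LA|_{E_n}.
\]
The evaluation fibration $\Omega A \to LA \to A$ shows that $LA$ is again a $\pi$-finite $p$-space: the class of such spaces is closed under loops (homotopy groups are shifted and remain finite $p$-groups) and closed under extensions. Applying the inductive hypothesis to $LA$ identifies $|LA|_{E_n}$ with the homotopy cardinality of $L^n(LA) = L^{n+1}A$, completing the induction.

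There is no real obstacle: the argument is just a clean iteration of \Cref{card-blue}. The only things to verify are that $LA$ remains a $\pi$-finite $p$-space at each step and that the base case passes cleanly through rationalization, both of which are immediate. It is worth noting that integrality of $|A|_{E_n}$ (asserted in \Cref{card-blue}) is automatic from the recursion as well, since the base-case value is a nonnegative rational and each step preserves it in $\NN$.
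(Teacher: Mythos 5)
Your proof is correct and follows exactly the paper's (implicit, one-line) argument: induct on $n$ via \Cref{card-blue}, noting $LA$ stays $\pi$-finite $p$-typical, and terminate at height $0$ with \Cref{htpy-card}. The only slight wrinkle is the closing aside on integrality: for $n\geq 1$ the integrality is more cleanly read off from the identity $|A|_{E_n} = |\pi_0(L^{n-1}A)|$, rather than from the vague claim that ``each step preserves it in $\NN$'' starting from a merely rational base case; but this is a side remark and does not affect the argument.
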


\begin{rem}
    Is is easy to see that the homotopy cardinality of $LA$ is just the size of the set $\pi_0(A)$ (see \cite[Proposition 2.15]{yanovski2023homotopy}). Thus, for $n \geq 1$ we can reformulate \Cref{card-to-0} as
    \[
        |A|_{E_n} = 
        |\pi_0 \Map((S^1)^{n-1},A)| \qin 
        \NN.
    \]
\end{rem}

In contrast with the transchromatic proof, which proceeds by analyzing the behaviour of the Quillen $p$-divisible group under transchromatic base change, our proof employs the \textit{redshift philosophy} to transport the computation to an analogous, but simpler, categorical setting. Roughly, the transchromatic proof relates cardinalities at $E_{n+1}$ and at $E_n$ by approximating the latter with $C_n$, whereas our proof proceeds instead by approximating the former with $K(E_n)$.

To begin with, the category $\cMod_{E_n}$ of $\Kn$-local $E_n$-modules is an object of the category $\Catpfin$ of categories admitting $\pi$-finite $p$-space indexed colimits and functors that preserve them. 
The latter is $p$-typically $\infty$-semiadditive, allowing us to speak of the cardinality of $A$ at the object $\cMod_{E_n} \in \Catpfin$. This turns out to be simply the colimit of the constant $A$-shaped diagram on $E_n$, that is
\[
    |A|_{\cMod_{E_n}} \simeq A \otimes E_n
    \qin \cMod_{E_n}.
\]
Furthermore, the object $A\otimes E_n$ is dualizable, and its symmetric monoidal dimension is given by
\[
    \dim(A \otimes E_n) = |LA|_{E_n}
    \qin \pi_0(E_n).
\]

Note that while $\Catpfin$ is not stable, so we can not measure the chromatic height of the object $\cMod_{E_n}$, we can measure its \textit{semiadditive height}, which by the semiadditive redshift theorem is $n+1$ (see \cite[Theorem B]{AmbiHeight}).
Thus, the combination of the two displayed formulas above bears a close resemblance to \Cref{card-blue}. 
The bridge between the categorical story and the chromatic one is the algebraic $K$-theory functor, which produces a spectrum from a (stable) category. The key ingredient in our proof is the higher descent property of chromatically localized algebraic $K$-theory \cite[Theorem A]{cycloredshift}. It implies that the map 
\[
    \pi_0(\cMod_{E_n}^{\dbl}) \too 
    \pi_0(L_{T(n+1)}K(E_n))
\]
sending a dualizable module $M$ to its class $[M]$ in ($T(n+1)$-localized) $K$-theory preserves cardinalities of $\pi$-finite $p$-spaces. To conclude, we observe that by chromatic redshift, $L_{T(n+1)}K(E_n)$ is a non-zero commutative algebra and hence by the chromatic nullstellensatz admits a map of $T(n+1)$-local commutative algebras to (a mild extension of) $E_{n+1}$. Putting everything together we get
\[
    |A|_{E_{n+1}} = 
    |A|_{L_{T(n+1)}K(E_n)} = 
    \dim(A \otimes E_n) =
    |LA|_{E_n}.
\]

\begin{rem}
    To complete the picture and tighten the analogy, the role of the transchromatic character map is played by the ordinary character map
    \[
        \chi^A \colon 
        (\cMod_{E_n}^{\dbl})^A \too 
        E_n^{LA}
    \]
    taking a local system $V$ of dualizable $\Kn$-local $E_n$-modules on $A$ to its character $\chi^A_V \colon LA \to E_n$ whose value on a loop $\gamma \in LA$ is given by $\tr(\gamma \mid V)$. 
    Though not used in the present paper, it is true that the computation $\dim(A \otimes E_n) = |LA|_{E_n}$ is a special case of the compatibility of $\chi^A$ with higher semiadditive transfers. This is part of an upcoming work of Cnossen, Ramzi and the second and fourth authors.
\end{rem}

The symmetric monoidal dimension $\dim(A\otimes E_n)$ identifies with the perhaps more familiar \textit{Morava--Euler characteristic} (see \Cref{dim_dim})
\[
    \chi_{n}(A) :=
    \dim_{\FF_p}(K(n)^{0}(A)) - 
    \dim_{\FF_p}(K(n)^{1}(A)).
\]
Thus, using the relation $\dim(A \otimes E_n) = |LA|_{E_n}$ between cardinality and symmetric monoidal dimension, \Cref{card-blue} implies the following corollary which is stated in more concrete terms:

\begin{cor}[{cf.\ \cite[Corollary 4.8.6]{Lurie_Ell3}}]
    Let $A$ be a $\pi$-finite $p$-space. We have,
    \[
        \chi_n(A) = 
        |\pi_0 \Map((S^1)^n,A)|
        \qin \NN.
    \]
\end{cor}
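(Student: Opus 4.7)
The plan is to compose three inputs already furnished by the paper: the identification $\chi_n(A) = \dim(A \otimes E_n)$ of \Cref{dim_dim}, the bridge $\dim(A \otimes E_n) = |LA|_{E_n}$ stated in the introduction, and the iterated-loop description of higher semiadditive cardinalities at $E_n$ given by \Cref{card-to-0}, used in the reformulated form provided by the remark that follows it.

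Concretely, first I would rewrite the left-hand side using the two equalities from the introduction:
\[
    \chi_n(A) = \dim(A \otimes E_n) = |LA|_{E_n}.
\]
Next I would apply the reformulation of \Cref{card-to-0}, namely $|X|_{E_n} = |\pi_0 \Map((S^1)^{n-1}, X)|$ for $n \geq 1$, to the space $X = LA$. To justify this I need $LA$ to again be a $\pi$-finite $p$-space. The $\pi$-finiteness follows from the standard fact that mapping a finite CW complex into a $\pi$-finite target yields a $\pi$-finite space, and the $p$-space property is preserved because the homotopy groups of the components of $LA$ assemble from those of $A$ via extensions of subquotients of $\pi_*(A)$, hence remain $p$-groups. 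This yields
\[
    |LA|_{E_n} = |\pi_0 \Map((S^1)^{n-1}, LA)|.
\]

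Finally, the tensor-hom equivalence $\Map((S^1)^{n-1}, LA) \simeq \Map((S^1)^{n-1} \times S^1, A) \simeq \Map((S^1)^n, A)$ delivers the stated formula. The strategy leaves no genuine obstacle: the entire content is absorbed into the redshift-based proof of \Cref{card-blue} and the symmetric monoidal dimension/cardinality bridge; the remaining steps are either cited from the paper (\Cref{dim_dim}) or elementary checks about free loop spaces. If there is anywhere subtlety could hide, it is in \Cref{dim_dim}, which expresses the symmetric monoidal dimension of the dualizable $E_n$-module $A \otimes E_n$ as an alternating sum of $K(n)$-cohomology dimensions; but this is a routine computation obtained by reducing modulo the maximal ideal of $\pi_0 E_n$, so it is not a real obstruction.
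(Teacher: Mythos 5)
Your proposal is correct and matches the route the paper intends: the corollary is obtained by chaining $\chi_n(A)=\dim(A\otimes E_n)$ (\Cref{dim_dim}, together with the identification $K(n)\otimes_{E_n}(A\otimes E_n)\simeq K(n)\otimes A$), the relation $\dim(A\otimes E_n)=|LA|_{E_n}$, and \Cref{card-to-0} (in its reformulated form) applied to $LA$, finishing with the exponential adjunction $\Map((S^1)^{n-1},LA)\simeq\Map((S^1)^n,A)$; your observation that $LA$ is again a $\pi$-finite $p$-space is exactly the point needed to legitimize that application.
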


\begin{rem}
    In \cite{yanovski2023homotopy}, the fourth author used the above corollary to show that the sequence $|A|_{E_n}$, as a function of $n$, is $\ell$-adically continuous for every prime $\ell \mid p-1$. Or equivalently, that the sequence of natural numbers $\chi_n(A)$ is $\ell$-adically continuous and extrapolates to the homotopy cardinality of $A$ at $n = -1$.
\end{rem}

Given a $\pi$-finite $p$-space $A$, computing the homotopy cardinality of the iterated free loop space $L^nA$ might be quite difficult. There is however a family of examples, generalizing  \Cref{Ex_Card_EM}, in which it can be expressed in elementary terms. If $A$ happens to be a loop space, we have a decomposition $LA \simeq A \times \Omega A$. Since the homotopy groups of $\Omega A$ are the same as those of $A$ shifted by 1, one can easily deduce by induction the following:

\begin{example}[{cf.\ \cite[Example 3.1]{yanovski2023homotopy}}]\label{Ex_Card_Loop}
    Working in $\SpKn$ (or $\SpTn$), for every $\pi$-finite $p$-space $A$ which is a loop space we have
    \[
        |A|_{E_n} = 
        \prod_{k \ge 0} |\pi_k(A)|^{\binom{n-1}{k}}.
    \]
\end{example} 

We conclude with an observation regarding the proof of the $\infty$-semiadditivity of the monochromatic stable categories.
The original proof of the $\infty$-semiadditivity of $\SpKn$ due to Hopkins--Lurie \cite{AmbiKn} relies heavily on a careful analysis of the seminal computation of Ravenel--Wilson of $\Kn_*(B^dC_p)$ as a Hopf algebra \cite{ravenelwilson1980}, and an integral lift of this computation to $E_n$. In contrast, the proof of the $\infty$-semiadditivity of $\SpTn$ (hence in particular of $\SpKn$) in \cite{TeleAmbi} avoids such an elaborate analysis, and relies only on the computation of the Morava--Euler characteristics of the spaces $B^dC_p$ (in fact, only that they are rational and non-zero, see \cite[Corollary 5.1.9 and Theorem 5.3.1]{TeleAmbi}). While at the time it was not known whether one can prove this weaker property of the spaces $B^dC_p$ without invoking the full power of the Ravenel--Wilson computation, we can now close the circle by providing an alternative route.

\begin{obs}
    The proof of the $m$-semiadditivity of $\Sp_{T(n)}$ in \cite{TeleAmbi} requires only the knowledge of the Morava--Euler characteristic of $B^dC_p$ for $1\le d \le m-1$.
    Furthermore, the proof of \Cref{card-blue} for $d$-finite $p$-spaces, and the subsequent computation of the Morava--Euler characteristic of $B^dC_p$, depends only on the $d$-semiadditivity of $\Sp_{T(n)}$.
    Thus, by preforming all of the arguments under an inductive hypothesis on $m$, we can compute the Morava--Euler characteristic of $B^dC_p$ in the required range as above, hence, in particular, eliminating the reliance on the Ravenel--Wilson computation.
\end{obs}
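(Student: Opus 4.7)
The plan is to verify the two dependency claims and assemble them into a simultaneous induction on the semiadditivity level $m$, uniformly across heights.

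For the first claim, I would revisit the inductive proof of higher semiadditivity of $\Sp_{T(n)}$ in \cite{TeleAmbi}. That argument is organized by induction on $m$: at each stage, ambidexterity for $m$-finite spaces is reduced to ambidexterity for $(m-1)$-finite spaces together with the invertibility of certain elements controlled by the Morava--Euler characteristic of $B^dC_p$ for some $d \le m-1$. The task is to pinpoint where each $B^dC_p$ is used and confirm that its role is only in bridging level $d$ to level $d+1$, so that reaching level $m$ genuinely requires inputs only for $d \le m-1$. This is a careful read-through of \cite{TeleAmbi} rather than a new argument.

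For the second claim, I would trace the proof of \Cref{card-blue} restricted to $d$-finite $p$-spaces $A$. Each ingredient -- the identification $|A|_{\cMod_{E_n}} \simeq A \otimes E_n$ in $\Catpfin$, the symmetric monoidal formula $\dim(A \otimes E_n) = |LA|_{E_n}$, and the descent map $\pi_0(\cMod_{E_n}^{\dbl}) \to \pi_0(L_{T(n+1)}K(E_n))$ preserving $A$-cardinalities -- only invokes colimits indexed by $A$ and by $LA$. Since $LA$ is $d$-finite whenever $A$ is, all appeals to higher semiadditivity in the proof are at level $d$ for the relevant telescopic categories, with no hidden recourse to $m$-semiadditivity for $m > d$. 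The passage to $E_{n+1}$ via the chromatic nullstellensatz is itself a statement about $T(n+1)$-local commutative algebras and introduces no further semiadditivity requirement.

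Closing the loop by induction on $m$ is then immediate. Assuming $(m-1)$-semiadditivity of $\Sp_{T(n)}$ at all heights $n$, the previous paragraph applies \Cref{card-blue} to every $(m-1)$-finite $p$-space, in particular to $B^dC_p$ for $1 \le d \le m-1$. Iterating as in \Cref{card-to-0} identifies $|B^dC_p|_{E_n}$ with a homotopy cardinality of an iterated free loop space of $B^dC_p$, recovering \Cref{Ex_Card_EM}; these are nonzero rational numbers, which by the first claim suffices to upgrade the induction hypothesis to $m$-semiadditivity, bypassing \cite{ravenelwilson1980} entirely. The main obstacle is simply the bookkeeping: verifying that neither \cite{TeleAmbi} nor the proof of \Cref{card-blue} secretly invokes a higher level of semiadditivity than claimed, so that the two inductions can safely be interleaved without circularity.
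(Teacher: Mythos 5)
The paper presents this as an unproved ``observation''---essentially a meta-level assertion whose content \emph{is} the dependency analysis you describe---so there is no separate proof in the paper to compare against, and your elaboration unpacks the observation in precisely the intended way: isolate the two dependency claims, then interleave them in a joint induction on $m$, uniformly over heights. Your proposal is correct and takes essentially the same approach; the only point I would emphasize slightly more than you do is that the cross-height iteration (from $E_{n+1}$ down to $E_0$ via $\Cref{card-to-0}$) requires the inductive hypothesis on $(m-1)$-semiadditivity to hold simultaneously at \emph{all} heights, which is how the argument of \cite{TeleAmbi} is in fact structured, and that $LA$ remains $d$-finite when $A$ is (via the fiber sequence $\Omega A \to LA \to A$), which you implicitly use but is worth noting to close the induction cleanly.
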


\section{The Proof}

We now carry out in the detail the proof of \cref{card-blue} sketched in the introduction.
Given a small (symmetric monoidal) stable idempotent complete category $\cC$, we can consider its algebraic $K$-theory (commutative ring) spectrum $K(\cC)$. 
For a (commutative) ring spectrum $R$, one usually defines $K(R)$ as $K(\cC)$ for $\cC$ the (symmetric monoidal) category $\Mod_R^\omega$ of perfect $R$-module spectra. 
However, when $R$ is $T(n)$-local, we can also consider the algebraic $K$-theory of the category $\cMod_R^\dbl$ of dualizable $T(n)$-local $R$-modules. 
By \cite[Proposition 4.15]{clausen2020descent}, these two algebraic $K$-theory spectra identify after $T(n+1)$-localization. 
We shall henceforth adopt the definition in terms of dualizable $T(n)$-local modules and use the notations
\begin{gather*} 
    \KTnp(\cC) := \LTnp(K(\cC)), \\
    \KTnp(R) := \LTnp(K(R)) \simeq \LTnp(K(\cMod_R^\dbl)).
\end{gather*}
By construction, every $M\in \cMod_R^\dbl$ defines a class 
\[
    [M] \qin \pi_0(\KTnp(R)).
\]

We briefly recall the setup of the higher descent result for $\KTnp$ from \cite{cycloredshift}. Let $\CatLnf$ be the category of small $\Lnf$-local stable idempotent complete categories and exact functors. In \cite[Theorem A]{cycloredshift} we show that the functor
\[
    \KTnp \colon \CatLnf \too \SpTnp
\]
preserves limits and colimits of $\pi$-finite $p$-space shape. 
We let $\Catpfin \subset \Cat$ be the subcategory on those categories which admit $\pi$-finite $p$-space colimits and functors preserving them, and set
\[
    \CatLnfpfin := \CatLnf \cap \Catpfin.
\]

\begin{prop}\label{KTnp_Sadd}
    The category $\CatLnfpfin$ is $p$-typically $\infty$-semiadditive and the restricted functor
    \[
        \KTnp \colon \CatLnfpfin \too \SpTnp
    \]
    is $p$-typically $\infty$-semiadditive (i.e., preserves $\pi$-finite $p$-space limits and colimits).
\end{prop}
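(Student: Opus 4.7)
The plan is to derive both claims from three ingredients: (i) $\Catpfin$ is $p$-typically $\infty$-semiadditive, as noted in the introduction; (ii) $\SpTnp$ is $p$-typically $\infty$-semiadditive by \cite{TeleAmbi}; and (iii) $\KTnp \colon \CatLnf \to \SpTnp$ preserves $\pi$-finite $p$-space limits and colimits by \cite[Theorem A]{cycloredshift}. With these in hand, the proposition reduces to showing that $\CatLnfpfin$ is closed in $\Catpfin$ under $\pi$-finite $p$-space limits and colimits.

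Closure under limits is essentially tautological: for any $\pi$-finite $p$-space $A$ and diagram $A \to \CatLnfpfin$, the pointwise limit in $\Cat$ inherits stability, $\Lnf$-locality, idempotent completeness, and the existence of $\pi$-finite $p$-space colimits from its values. Closure under colimits is more delicate, and the plan is to use the $p$-typical $\infty$-semiadditivity of $\Catpfin$ to identify the colimit with the corresponding limit, which lies in $\CatLnfpfin$ by the previous sentence. Once closure is granted, the norm equivalences $A \otimes \cC \simeq \cC^A$ in $\Catpfin$ restrict to $\CatLnfpfin$, furnishing its $p$-typical $\infty$-semiadditivity. For the functorial claim, the inclusion $\CatLnfpfin \hookrightarrow \CatLnf$ preserves $\pi$-finite $p$-space (co)limits by the closure step, and $\KTnp$ preserves them on the larger category by \cite[Theorem A]{cycloredshift}, so the composite does too.

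The main obstacle is the closure under $\pi$-finite $p$-space colimits. A priori one must reconcile three potentially different colimit operations (in $\Cat$, in $\CatLnf$, and in $\Catpfin$), and the bare existence of $\pi$-finite $p$-space colimits inside the colimit category is not automatic—the colimit of a diagram of $\Lnf$-local stable categories each admitting $\pi$-finite $p$-space colimits need not itself admit them on the nose. The semiadditive identification of the colimit with a pointwise limit is the cleanest way around this, but verifying that it is compatible with the additional structure of being $\Lnf$-local, stable, and idempotent complete is the substantive content of the argument.
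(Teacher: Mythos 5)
Your proposal shares the paper's core ingredients, but the paper's proof is materially shorter, and your final step as written contains a gap.

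For the semiadditivity of $\CatLnfpfin$, the paper verifies only that the inclusion $\CatLnfpfin \hookrightarrow \Catpfin$ preserves limits and then cites \cite[Proposition 2.1.4(3)]{AmbiHeight} to conclude. It does not attempt to show closure under $\pi$-finite $p$-space colimits directly, which is the part you flag as delicate. Your route of identifying the $\Catpfin$-colimit with the $\Catpfin$-limit via semiadditivity is in the right direction, but it leaves unaddressed a real subtlety: the inclusion $\CatLnfpfin \hookrightarrow \Catpfin$ is \emph{not} full (morphisms in $\CatLnfpfin$ must additionally be exact), so an object of $\CatLnfpfin$ that is a colimit in $\Catpfin$ is not automatically a colimit in $\CatLnfpfin$, and ``the norm equivalences restrict'' requires justification that the norm maps in the two ambient categories agree. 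The cited proposition is designed precisely to absorb this kind of bookkeeping.

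For the functorial claim, there is a genuine gap: you assert that the inclusion $\CatLnfpfin \hookrightarrow \CatLnf$ preserves $\pi$-finite $p$-space colimits ``by the closure step,'' but your closure step concerns $\Catpfin$, which is a different subcategory of $\Cat$ with different morphisms (colimit-preserving vs.\ exact), and colimits in $\CatLnf$ need not agree with colimits in $\Catpfin$. Nothing you proved gives closure under colimits in $\CatLnf$, and one should not expect the inclusion into $\CatLnf$ to preserve them since $\CatLnf$ is presumably not itself $p$-typically $\infty$-semiadditive. The paper sidesteps this entirely: it observes that the composite $\CatLnfpfin \to \CatLnf \to \SpTnp$ preserves $\pi$-finite $p$-space \emph{limits}, and then invokes the general principle that a functor between $p$-typically $\infty$-semiadditive categories preserving $\pi$-finite $p$-space limits automatically preserves the corresponding colimits (via the naturality of the norm maps). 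You should replace your colimit-preservation claim for the inclusion into $\CatLnf$ with this limit-only argument.
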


\begin{proof}
    The category $\Catpfin$ is $p$-typically $\infty$-semiadditive by \cite[Proposition 2.2.7]{AmbiHeight} (which immediately follows from \cite[Proposition 5.26]{harpaz2020ambidexterity}).
    The inclusion $\CatLnfpfin \hookrightarrow \Catpfin$ preserves limits, thus $\CatLnfpfin$ is also $p$-typically $\infty$-semiadditive by \cite[Proposition 2.1.4(3)]{AmbiHeight}. Similarly, the second inclusion $\CatLnfpfin \hookrightarrow \CatLnf$ preserves limits.
    Thus, by the higher descent theorem \cite[Theorem A]{cycloredshift}, the composition
    \[
        \CatLnfpfin \too
        \CatLnf \oto{\KTnp}
        \SpTnp
    \]
    preserves $\pi$-finite $p$-space limits hence is $p$-typically $\infty$-semiadditive.
\end{proof}

We deduce the following fundamental identity:

\begin{prop}\label{card-K}
    Let $R \in \CAlg(\SpTn)$ and let $A$ be a $\pi$-finite $p$-space, then
    \[
        |A|_{\KTnp(R)} = [A\otimes R]
        \qin \pi_0(\KTnp(R)).
    \]
\end{prop}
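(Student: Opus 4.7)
My plan is to deduce the proposition from \cref{KTnp_Sadd} by applying the $p$-typically $\infty$-semiadditive functor $\KTnp \colon \CatLnfpfin \to \SpTnp$ to the algebra $\cMod_R^{\dbl} \in \CatLnfpfin$ and invoking the naturality of higher semiadditive cardinalities under such functors.

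First, I would verify that $\cMod_R^{\dbl}$ is genuinely a symmetric monoidal algebra object of $\CatLnfpfin$. Stability, idempotent completeness, and $\Lnf$-locality (via $T(n)$-locality) are immediate, while closure of dualizable modules under $\pi$-finite $p$-space colimits compatible with the tensor product follows from the $p$-typical $\infty$-semiadditivity of the ambient category $\cMod_R^{T(n)}$ (\cite{TeleAmbi}): in a semiadditive setting the relevant colimits coincide with the corresponding limits, which in turn preserve dualizability.

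Second, any $p$-typically $\infty$-semiadditive symmetric monoidal functor preserves the higher semiadditive cardinality of algebras. Applied to $\KTnp$, this says that the map
\[
    \pi_0 \Map_{\CatLnfpfin}(\one, \cMod_R^{\dbl}) \too \pi_0 \KTnp(R)
\]
sends $|A|_{\cMod_R^{\dbl}}$ to $|A|_{\KTnp(R)}$; moreover, by the very construction of $\KTnp$, this map identifies the isomorphism class of an object $M \in \cMod_R^{\dbl}$ with its $K$-theory class $[M]$.

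Third, I would identify $|A|_{\cMod_R^{\dbl}}$, as an element of the set of isomorphism classes of dualizable $R$-modules, with $A \otimes R \simeq \colim_A R$. This is the general fact, already flagged in the introduction, that in any $p$-typically $\infty$-semiadditive symmetric monoidal category the cardinality of $A$ at the unit algebra is represented by the $A$-indexed colimit of the unit: the endofunctor $|A|_{\cMod_R^{\dbl}}$ sends $X$ to $\colim_A X$ by unwinding the composition $\Delta \circ \nabla$, and evaluating at the unit $R$ gives $\colim_A R \simeq A \otimes R$. Combining with the second step yields the desired identity $|A|_{\KTnp(R)} = [A \otimes R]$.

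The main obstacle, I expect, is Step~3 — namely, pinning down that the abstract cardinality $|A|_{\cMod_R^{\dbl}}$ is represented, under the identification $\pi_0\Map_{\CatLnfpfin}(\one, \cMod_R^{\dbl}) \simeq \pi_0(\cMod_R^{\dbl})^{\simeq}$, by the concrete iso class of $A \otimes R$. This is a bookkeeping compatibility between the abstract $\infty$-semiadditive formalism of \cite{AmbiKn, AmbiHeight} and its incarnation on $\Catpfin$, rather than a deep calculation; once it is in hand, the rest is a formal consequence of \cref{KTnp_Sadd}.
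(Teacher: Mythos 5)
Your proposal follows essentially the same route as the paper's proof: place $\cMod_R^{\dbl}$ in $\CatLnfpfin$, invoke \cref{KTnp_Sadd} to say $\KTnp$ preserves cardinalities, and identify $|A|_{\cMod_R^{\dbl}}$ with (the iso class of) $A\otimes R$. The paper handles the two facts you flag as needing care by direct citation: that $\cMod_R^{\dbl}$ is stable, $p$-typically $\infty$-semiadditive, and closed under $\pi$-finite $p$-space colimits is \cite[Propositions 2.54 and 4.15]{moshe2021higher}, and the identification $|A|_{\cMod_R^{\dbl}} \simeq A\otimes R$ is \cite[Proposition 7.6]{moshe2021higher}; your heuristic for the latter (the unit functor $\one \to \cMod_R^\dbl$ is $A \mapsto A\otimes R$) is the right idea. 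One small caution in your Step 1: ``limits preserve dualizability'' is not a general fact, so the closure of dualizables under $\pi$-finite $p$-space colimits does deserve the specific reference rather than that slogan; and in Step 2 you need only that $\KTnp$ is $p$-typically $\infty$-semiadditive and lax symmetric monoidal, not strongly monoidal.
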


\begin{proof}
    A $p$-typically $\infty$-semiadditive functor preserves cardinalities of $\pi$-finite $p$-spaces by \cite[Corollary 3.2.7]{TeleAmbi}.
    By \cite[Proposition 2.54 and Proposition 4.15]{moshe2021higher} the category $\cMod_R^\dbl$ is stable and has $\pi$-finite $p$-space indexed colimits (in fact, it is $p$-typically $\infty$-semiadditive), and is clearly $\Lnf$-local, i.e., it is in $\CatLnfpfin$. 
    Since the inclusion $\CatLnfpfin \into \Catpfin$ 
    is $p$-typically $\infty$-semiadditive, cardinalities in the source are computed as in the target, so in particular, by \cite[Proposition 7.6]{moshe2021higher}, we have
    \[
        |A|_{\cMod_R^\dbl} = A\otimes R \qin 
        \pi_0(\cMod_R^\dbl).
    \]
    Combining this with \Cref{KTnp_Sadd}, which implies that $\KTnp$ preserves cardinalities of $\pi$-finite $p$-spaces, we get
    \[
        |A|_{\KTnp(R)} = [A\otimes R] \qin 
        \pi_0(\KTnp(R)).
    \]
\end{proof}

We now specialize to the case of the Lubin--Tate spectrum $R = E_n$. Since $E_n$ is even-periodic with $\pi_0(E_n)$ a complete regular local ring, we have by \cite[Proposition 10.11]{AkhilGalois} that $\cMod_R^\dbl = \Mod_R^\omega$. Thus, the above considerations regarding algebraic $K$-theory of perfect vs.\ dualizable modules apply even before $T(n+1)$-localization. Moreover, in this case, the equality of \Cref{card-K} is essentially of integers. 

\begin{prop}\label{K0_Z}
    For every $n$, we have an isomorphism of rings
    \(
       \pi_0(K(E_n)) \simeq \ZZ.
    \)
\end{prop}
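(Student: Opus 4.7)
The plan is to construct a direct isomorphism $\pi_0(K(E_n)) \xrightarrow{\sim} K_0(\pi_0(E_n))$ and observe that the latter is $\ZZ$ because $\pi_0(E_n)$ is a local ring.

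For any perfect $E_n$-module $M$, the homotopy groups $\pi_0 M$ and $\pi_1 M$ are finitely generated over the Noetherian ring $\pi_0(E_n)$ and, by its regularity, have finite projective dimension, so they define classes in $K_0(\pi_0(E_n))$. Setting $\chi(M) := [\pi_0 M] - [\pi_1 M]$, the $2$-periodicity of $E_n$ compresses the homotopy long exact sequence of a cofiber sequence $A \to B \to C$ into a cyclic $6$-term exact hexagon on $\pi_0$ and $\pi_1$; the alternating-sum identity for such a hexagon yields $\chi(B) = \chi(A) + \chi(C)$, so $\chi$ descends to a homomorphism $\chi \colon \pi_0(K(E_n)) \to K_0(\pi_0(E_n))$. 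Since $\pi_0(E_n)$ is local, every finitely generated projective is free and $K_0(\pi_0(E_n)) \cong \ZZ$ via rank, while $\chi([E_n]) = 1$ ensures surjectivity.

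For injectivity, I would show that $\pi_0(K(E_n))$ is cyclic, generated by $[E_n]$. The key input is that every perfect $E_n$-module $M$ admits a finite minimal cell decomposition by shifts of $E_n$, produced by a Nakayama-style argument: the quotient $k := E_n/\mathfrak{m}$ (where $\mathfrak{m} \subset \pi_0(E_n)$ is the maximal ideal) is a perfect $E_n$-module with $\pi_*(k)$ a graded field (concentrated in even degrees with value $\cl{\FF}_p$), and for any perfect $M$ the finite-dimensional graded $\cl{\FF}_p$-vector space $\pi_*(M \otimes_{E_n} k)$ supplies a set of cells that lift to $E_n$-module maps by completeness of $\pi_0(E_n)$. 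Such a decomposition yields $[M] = \sum_i (-1)^{a_i}[E_n] = \chi(M) \cdot [E_n]$ in $\pi_0(K(E_n))$, so $\pi_0(K(E_n)) = \ZZ \cdot [E_n]$ and $\chi$ is an isomorphism. The main technical obstacle is this finite cell decomposition, whose existence crucially uses completeness of $\pi_0(E_n)$ (for Nakayama to converge), Noetherianity (to bound cells per degree), and regularity (to bound the range of degrees); every other step is either formal or follows from standard facts about local rings.
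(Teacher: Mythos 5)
Your proof is correct and, at bottom, follows the same two-step strategy as the paper's: (1) every perfect $E_n$-module is a finite iterated cofiber of free modules, so $\pi_0(K(E_n))$ is cyclic on $[E_n]$, and (2) a rank-type invariant distinguishes the integer multiples of $[E_n]$. For (1) the paper simply cites \cite[Proposition~10.11]{AkhilGalois}, observing that the ``local and dualizable implies perfect'' half of that proof already produces the cell decomposition without passing to retracts; you instead re-derive this by a Nakayama/minimal-resolution argument. For (2) the paper uses the symmetric monoidal dimension map $\pi_0(K(E_n)) \to \pi_0(E_n)$, while you use the Euler characteristic $\chi(M) = [\pi_0 M] - [\pi_1 M]$ landing in $K_0(\pi_0(E_n)) \cong \ZZ$; these two invariants agree on $[E_n]$ and hence coincide on the cyclic group, which is precisely the content of the paper's Remark~\ref{dim_dim}. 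One small caution on your sketch of (1): a basis of $\pi_*(M \otimes_{E_n} k)$ does not ``lift'' to cells of $M$ in one step --- rather, the argument is iterative: choose a minimal generating set for $\pi_*(M)$ over $\pi_*(E_n)$, map in a finite free module realizing these generators, pass to the fiber, and repeat, with regularity of $\pi_0(E_n)$ (finite global dimension) guaranteeing termination and locality plus Nakayama guaranteeing that the final syzygy is free. The ranks you get are indeed governed by $\dim_k \pi_*(M\otimes_{E_n}k)$ and its higher $\mathrm{Tor}$'s, so your intuition is right, but the lifting happens one syzygy at a time. With that repaired, your proof is essentially the paper's, just with the cited proposition unwound and a slightly different (but equivalent) choice of rank map.
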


\begin{proof}
    We observe that in \cite[Proposition 10.11]{AkhilGalois}, the proof of the ``local and dualizable implies perfect'' direction does not use retracts. Thus, a perfect $E_n$-module is just an iterated cofiber of finite free $E_n$-modules, hence its class in $\pi_0(K(E_n))$ is an integer multiple of the unit $[E_n]$.
    Furthermore, the classes $k[E_n]$ are distinct from one another since they are distinguished by the dimension map $\pi_0(K(E_n)) \to \pi_0(E_n)$.
\end{proof}

\begin{rem}\label{dim_dim}
    \Cref{K0_Z} implies the well known fact that
    \[
        \dim_{E_n}(M) = 
        \dim_{\FF_p}(\pi_0(K(n)\otimes_{E_n}M)) - 
        \dim_{\FF_p}(\pi_1(K(n)\otimes_{E_n}M))
    \]
    for every dualizable $K(n)$-local $E_n$-module $M$. Indeed, both sides of the equation are additive invariants, hence factor through $\pi_0(K(E_n)) \simeq \ZZ$, and agree on the unit $[E_n]$. 
\end{rem}

We are now in position to prove our main theorem.

\begin{proof}[Proof of \cref{card-blue}]
    By \cref{K0_Z}, a class $[M] \in \pi_0(K(E_n))$ is just an integer, which can be further identified with $\dim(M) \in \pi_0(E_n)$. Thus, combining \cref{card-K} and \cite[Corollary 3.3.10]{TeleAmbi}, we have an equality of integers
    \[
        |A|_{\KTnp(E_n)} = [A \otimes E_n] = \dim(A \otimes E_n) = |LA|_{E_n}.
    \]
    It remains to observe that since $\KTnp(E_n) \neq 0$ by the seminal redshift result of \cite[Theorem A]{yuan2021examples}, we can apply the chromatic nullstellensatz in the form of \cite[Theorem D]{Null}, to get a map of commutative algebras 
    \[
        \KTnp(E_n) \too E_{n+1}(\kappa)
    \]
    where $E_{n+1}(\kappa)$ is the Lubin--Tate spectrum associated to the unique formal group over some algebraically closed field $\kappa$. Finally, both this map and the map $E_{n+1} \to E_{n+1}(\kappa)$ induced by $\cl{\FF}_p \into \kappa$ preserve cardinalities, so we get 
    \[
        |A|_{E_{n+1}} = |A|_{\KTnp(E_n)} = |LA|_{E_n}.
    \]
\end{proof}

\section{Acknowledgements}

We thank the anonymous referees for their helpful comments.
The second author is partially supported by the Danish National Research Foundation through the Copenhagen Centre for Geometry and Topology (DNRF151).
The third author was supported by ISF1588/18, BSF 2018389 and the ERC under the European Union's Horizon 2020 research and innovation program (grant agreement No. 101125896). The fourth author was supported by ISF1848/23.

\bibliographystyle{alpha}
\phantomsection\addcontentsline{toc}{section}{\refname}
\bibliography{cyclored}

\end{document}